\newcommand{\p}{\partial}
\newcommand{\rank}{\mathop{\rm rank}\nolimits}
\newtheorem{theorem}{Theorem}
\newtheorem{lemma}[theorem]{Lemma}
\newtheorem*{proposition*}{Proposition}
{\theoremstyle{definition}

\newtheorem{remark}[theorem]{Remark}

}
\newcommand{\todo}[1][\null]{\ensuremath{\clubsuit}}
\newcommand{\noprint}[1]{}
\begin{document}
\par\noindent {\LARGE\bf
Point and contact equivalence groupoids \\ 
of two-dimensional quasilinear hyperbolic equations
\par}

\vspace{4mm}\par\noindent {\large Roman O.\ Popovych
} \par\vspace{2mm}\par

\vspace{2mm}\par\noindent {\it
Fakult\"at f\"ur Mathematik, Universit\"at Wien, Oskar-Morgenstern-Platz 1, 1090 Wien, Austria
\par\noindent 
Institute of Mathematics of NAS of Ukraine, 3 Tereshchenkivska Str., 01024 Kyiv, Ukraine
}

\vspace{2mm}\par\noindent
E-mail: rop@imath.kiev.ua

\vspace{8mm}\par\noindent\hspace*{10mm}\parbox{140mm}{\small
We describe the point and contact equivalence groupoids 
of an important class of two-dimensional quasilinear hyperbolic equations. 
In particular, we prove that this class is normalized in the usual sense with respect to point transformations, 
and its contact equivalence groupoid is generated by the first-order prolongation 
of its point equivalence groupoid, the contact vertex group of the wave equation 
and a family of contact admissible transformations between trivially Darboux-integrable equations.
}\par\vspace{4mm}

\noprint{

MSC: 35A30 (Primary) 35L72, 35B06 (Secondary)
35-XX   Partial differential equations
    35A30   Geometric theory, characteristics, transformations [See also 58J70, 58J72]
    35B06   Symmetries, invariants, etc.
  35Lxx	  Hyperbolic equations and systems [See also 58J45]
	35L72  	Second-order quasilinear hyperbolic equations

Keywords: 
quasilinear second-order hyperbolic equations
contact equivalence groupoid, 
point equivalence groupoid, 
nonlinear Klein-Gordon equations, 
equivalence group, 
normalized class of differential equations,
}

\section{Introduction}\label{sec:Introduction}

Genuine (first-order) contact transformations between differential equations~\cite{olve1995A,stor2000A}, 
are rarer objects than their point counterparts. 
These transformations were introduced by Sophus Lie. 
In particular, it was known to him~\cite{kush2010a} that the class of Monge--Amp\'ere equations is closed 
under the action of its contact equivalence (pseudo)group, 
which consists of all contact transformations of two independent and one dependent variables. 
In the modern terminology~\cite{opan2020d,popo2010a,vane2020b}, 
this means that this class is normalized in the usual sense with respect to contact transformations. 
The well-known B\"acklund theorem~\cite[Theorem~4.32]{olve1995A} states that 
genuine contact transformations, which are not the first-order prolongations of point transformations, 
exist only in the case of one dependent variable.  

In the present paper, we compute the point and contact equivalence groupoids 
of the class~$\mathcal H_{\rm gen}$ of two-dimensional quasilinear hyperbolic equations of the form
\begin{gather}\label{eq:GenKGEqs}
u_{xy}=f(x,y,u,u_x,u_y),
\end{gather}
which is properly contained in the class of Monge--Amp\'ere equations. 
Here $x$ and~$y$ are the independent variables, 
$u$ is the unknown function of~$(x,y)$, which is treated as the dependent variable, 
and the arbitrary element~$f$ of the class runs through the set~$\mathcal A$ of differential functions for these variables
that depend at most on $(x,y,u,u_x,u_y)$, i.e., whose order is less than or equal to one. 
Subscripts of functions denote derivatives with respect to the corresponding variables,
e.g., $u_{xy}:=\p^2 u/\p x\,\p y$ and $f_{u_x}:=\p f/\p u_x$,
and we use the same notation for the derivatives $u_x$, $u_y$, $u_{xx}$, $u_{xy}$ and $u_{yy}$ 
and for the corresponding coordinates of 
the second-order jet space~$\mathrm J^2(\mathbb R^2_{x,y},\mathbb R^{}_u)$ with the independent variables $(x,y)$ and the dependent variable~$u$. 
The consideration is within the local framework. 
We employ the terms ``group'' and ``groupoid'' for pseudogroups and pseudogroupoids, respectively.
For a fixed value of the arbitrary element~$f$,
let $\mathcal E_f$, $G^{\rm c}_f$ and~$\mathcal G^{\rm c}_f$ denote the equation from the class~$\mathcal H_{\rm gen}$ with this value of~$f$,  
its contact symmetry group and its contact vertex group, $\mathcal G^{\rm c}_f:=\{(f,\Phi,f)\mid\Phi\in G^{\rm c}_f\}$.
See~\cite{vane2020b} for necessary notions of the theory of admissible transformations within classes of differential equations. 
The space with coordinates $(x,y,u,u_x,u_y)$ 
is the minimal underlying space for both the point and the contact equivalence groups of the class~$\mathcal H_{\rm gen}$, 
cf.\ \cite[footnote~1]{kuru2020a} and \cite[footnote~1]{opan2020d}. 
By $\mathrm D_x$ and~$\mathrm D_y$ we denote 
the operators of total derivatives with respect to~$x$ and~$y$, respectively,
$\mathrm D_x=\p_x+u_x\p_u+u_{xx}\p_{u_x}+u_{xy}\p_{u_y}+\cdots$ and
$\mathrm D_y=\p_y+u_y\p_u+u_{xy}\p_{u_x}+u_{yy}\p_{u_y}+\cdots$. 
\looseness=-1

The form~\eqref{eq:GenKGEqs}  is the most general form for equations that can be interpreted  
as (1+1)-dimensional generalized nonlinear Klein--Gordon equations in the light-cone (characteristic) coordinates. 
It is also the canonical form for single second-order two-dimensional partial differential equations 
each of whose two Monge systems admits a first integral of the first order~\cite[Theorem 12.1.2]{stor2000A}.
The class~$\mathcal H_{\rm gen}$ contains a number of famous equations, 
including the wave equation, the Klein--Gordon equation, the Liouville equation, the Tzitzeica equation 
and the sine-Gordon (or Bonnet) equation.
The problem of studying the subclass of equations of the form~\eqref{eq:GenKGEqs} with $f_{u_x}=f_{u_y}=0$
within the framework of group analysis of differential equations was posed by Sophus Lie~\cite{lie1881b}. 
The group classification problem for this subclass was completely solved in~\cite{boyk2021a}; 
see also a review of results on this and other subclasses of~$\mathcal H_{\rm gen}$ therein. 
In~\cite{lahn2005a,lahn2006a}, Lie symmetries had been classified for 
the wider subclass of~$\mathcal H_{\rm gen}$ that consists of the equations of the form~\eqref{eq:GenKGEqs} with $f_{u_x}=f_{u_y}$, 
which had been originally presented in the regular space-time variables, 
and the part of this classification for the above narrower subclass was enhanced in~\cite{boyk2021a}; 
see the discussion in the conclusions of~\cite{boyk2021a,vane2020b}.
The equations from the entire class~$\mathcal H_{\rm gen}$ 
for which each of the two Monge systems admits at least two functionally independent first integrals
are called hyperbolic Goutsat equations. 
Such equations were classified by Goutsat with respect to the equivalence generated 
by the point equivalence group~$G^\sim_{\rm gen}$ of the class~$\mathcal H_{\rm gen}$ 
(this group is given in Theorem~\ref{thm:GenKGEqsEquivGroupoid} below). 
See~\cite[Chapter~13]{stor2000A} for the presentation of the enhanced classification proof by Vessiot
and~\cite{kuzn2012a} for references on modern results related to Darboux-integrable equations. 

We describe contact admissible transformations within the class~$\mathcal H_{\rm gen}$, 
which essentially generalizes Lie's study in~\cite{lie1881b}. 
After deriving determining equations for such transformations 
in Section~\ref{sec:GenKGEqsContactAdmTransDetEqs}, 
we compute point and contact equivalence groupoids of the class~$\mathcal H_{\rm gen}$ in Section~\ref{sec:GenKGEqsEquivGroupoids}.
In particular, we prove that both the sources and targets of genuine contact admissible transformations 
are trivially Darboux-integrable equations.

\section{Determining equations for contact admissible transformations}
\label{sec:GenKGEqsContactAdmTransDetEqs}

Applying the direct method of finding admissible transformations,  
we fix a contact admissible transformation $\mathcal T=(f,\Phi,\tilde f)$ in the class~$\mathcal H_{\rm gen}$.
Here the equations $\mathcal E_f$: $u_{xy}=f(x,y,u,u_x,u_y)$ and 
\smash{$\mathcal E_{\tilde f}$}: $\tilde u_{\tilde x\tilde y}=\tilde f(\tilde x,\tilde y,\tilde u,\tilde u_{\tilde x},\tilde u_{\tilde y})$ 
belong to the class~$\mathcal H_{\rm gen}$, 
where $f\in{\rm C}^\infty(\Omega,\mathbb R)$ and $\tilde f\in{\rm C}^\infty(\tilde\Omega,\mathbb R)$ 
with connected open domains $\Omega$ and~$\tilde\Omega$ contained in~$\mathbb R^5$.
$\Phi$ is a contact transformation with the independent variables $(x,y)$ and the dependent variable~$u$,
$\Phi$: $(\tilde x,\tilde y,\tilde u,\tilde u_{\tilde t},\tilde u_{\tilde y})=(X,Y,U,U^x,U^y)$,
that is defined by a diffeomorphism from~$\Omega$ onto~$\tilde\Omega$
and maps $\mathcal E_f$ to \smash{$\mathcal E_{\tilde f}$}, $\Phi_*\mathcal E_f=\smash{\mathcal E_{\tilde f}}$.
Thus, the functions $X$, $Y$, $U$, $U^x$ and~$U^y$ in the components of the transformation~$\Phi$ 
are smooth functions of $z:=(x,y,u,u_x,u_y)\in\Omega$ with $\mathrm J:=\big|\p(X,Y,U,U^x,U^y)/\p(x,y,u,u_x,u_y)\big|\ne0$
that satisfy the contact condition
\begin{gather}\label{eq:GenKGEqsContactCondition}
\begin{split}
&U^x\mathrm D_xX+U^y\mathrm D_xY=\mathrm D_xU,\\
&U^x\mathrm D_yX+U^y\mathrm D_yY=\mathrm D_yU.
\end{split}
\end{gather}
Collecting coefficients of the second derivatives of~$u$ in this condition leads to the system
\begin{gather}\label{eq:GenKGEqsSplitContactCondition}
\begin{split}
&U^xX_{u_x}+U^yY_{u_x}=U_{u_x},\quad U^x\hat{\mathrm D}_xX+U^y\hat{\mathrm D}_xY=\hat{\mathrm D}_xU,\\
&U^xX_{u_y}+U^yY_{u_y}=U_{u_y},\quad U^x\hat{\mathrm D}_yX+U^y\hat{\mathrm D}_yY=\hat{\mathrm D}_yU,
\end{split}
\end{gather}
where $\hat{\mathrm D}_x=\p_x+u_x\p_u$ and $\hat{\mathrm D}_y=\p_y+u_y\p_u$ 
are the truncated operators of total derivatives with respect to~$x$ and~$y$, respectively.

\begin{lemma}\label{lem:GenKGEqsContactAdmTransDetEqs}
Up to the discrete equivalence transformation 
$\mathscr I^0$: $\tilde x=y$, $\tilde y=x$, $\tilde u=u$, $\tilde u_{\tilde x}=u_y$, $\tilde u_{\tilde y}=u_x$, $\tilde f=f$
of the class~$\mathcal H_{\rm gen}$, 
the components of the transformational part~$\Phi$ of any contact admissible transformation~$\mathcal T$ in this class
satisfy, on the entire corresponding domain~$\Omega$, the joint system of~\eqref{eq:GenKGEqsSplitContactCondition} and
\begin{subequations}\label{eq:GenKGEqsConditionForContactAdmTransReduced}
\begin{gather}\label{eq:GenKGEqsConditionForContactAdmTransReducedA}
\hat{\mathrm D}_yX+fX_{u_x}=0,\quad X_{u_y}=0,\quad U^x_{u_y}=(\Phi^*\tilde f)Y_{u_y},\quad \hat{\mathrm D}_yU^x+fU^x_{u_x}=(\Phi^*\tilde f)\hat{\mathrm D}_yY,
\\\label{eq:GenKGEqsConditionForContactAdmTransReducedB}
\hat{\mathrm D}_xY+fY_{u_y}=0,\quad Y_{u_x}=0,\quad U^y_{u_x}=(\Phi^*\tilde f)X_{u_x},\quad \hat{\mathrm D}_xU^y+fU^y_{u_y}=(\Phi^*\tilde f)\hat{\mathrm D}_xX.
\end{gather}
\end{subequations}
\end{lemma}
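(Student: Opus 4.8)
The equations~\eqref{eq:GenKGEqsSplitContactCondition} are already at our disposal, being the splitting of the contact condition~\eqref{eq:GenKGEqsContactCondition}, so it remains only to derive~\eqref{eq:GenKGEqsConditionForContactAdmTransReduced}. The plan is to exploit the one ingredient of admissibility not yet used, namely $\Phi_*\mathcal E_f=\mathcal E_{\tilde f}$. Because $\Phi$ is a contact transformation, prolonging it to the second order yields, for $i\in\{x,y\}$, the identities $\mathrm D_iU^x=(\Phi^*\tilde u_{\tilde x\tilde x})\mathrm D_iX+(\Phi^*\tilde u_{\tilde x\tilde y})\mathrm D_iY$ and $\mathrm D_iU^y=(\Phi^*\tilde u_{\tilde x\tilde y})\mathrm D_iX+(\Phi^*\tilde u_{\tilde y\tilde y})\mathrm D_iY$. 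Eliminating the non-principal derivatives $\Phi^*\tilde u_{\tilde x\tilde x}$ and $\Phi^*\tilde u_{\tilde y\tilde y}$ produces two expressions for $\Phi^*\tilde u_{\tilde x\tilde y}$, and the condition $\Phi_*\mathcal E_f=\mathcal E_{\tilde f}$ is exactly that both equal $\Phi^*\tilde f$ on~$\mathcal E_f$. After substituting $u_{xy}=f$ and treating $u_{xx}$ and $u_{yy}$ as free, each $\mathrm D_x(\cdot)$ is affine in $u_{xx}$ and each $\mathrm D_y(\cdot)$ affine in $u_{yy}$, so these become polynomial conditions in $(u_{xx},u_{yy})$ to be split.

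The decisive and most delicate step, where $\mathscr I^0$ enters, is an analysis of the non-principal derivative $\Phi^*\tilde u_{\tilde x\tilde x}$. Using $\Phi^*\tilde u_{\tilde x\tilde y}=\Phi^*\tilde f$, the identity with $i=x$ gives $\Phi^*\tilde u_{\tilde x\tilde x}=(\mathrm D_xU^x-(\Phi^*\tilde f)\mathrm D_xY)/\mathrm D_xX$, which does not involve $u_{yy}$, while the one with $i=y$ gives $\Phi^*\tilde u_{\tilde x\tilde x}=(\mathrm D_yU^x-(\Phi^*\tilde f)\mathrm D_yY)/\mathrm D_yX$, which does not involve $u_{xx}$. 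Were $\mathrm D_xX$ and $\mathrm D_yX$ both not identically zero in $(u_{xx},u_{yy})$, then on the open dense set where both are nonzero the two formulas would force $\Phi^*\tilde u_{\tilde x\tilde x}$ to be independent of $u_{xx}$ and $u_{yy}$, hence a function of $z$ alone; this is impossible, since $\tilde u_{\tilde x\tilde x}$ is a free coordinate on~$\mathcal E_{\tilde f}$ onto which the prolonged~$\Phi$ maps~$\mathcal E_f$. Thus $\mathrm D_xX=0$ or $\mathrm D_yX=0$ on all of~$\mathcal E_f$. The two alternatives are interchanged by~$\mathscr I^0$, so after composing with it if necessary I may assume $\mathrm D_yX=0$ on~$\mathcal E_f$; splitting $\mathrm D_yX=\hat{\mathrm D}_yX+fX_{u_x}+u_{yy}X_{u_y}$ with respect to $u_{yy}$ then gives $X_{u_y}=0$ and $\hat{\mathrm D}_yX+fX_{u_x}=0$, the first two equations of~\eqref{eq:GenKGEqsConditionForContactAdmTransReducedA}.

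The same reasoning for $\Phi^*\tilde u_{\tilde y\tilde y}$ through $U^y$ forces $\mathrm D_xY=0$ or $\mathrm D_yY=0$ on~$\mathcal E_f$; the option $\mathrm D_yY=0$ is excluded because, together with $\mathrm D_yX=0$, it gives $J_2:=(\mathrm D_xX)(\mathrm D_yY)-(\mathrm D_yX)(\mathrm D_xY)=0$, i.e.\ functional dependence of $\tilde x,\tilde y$ on solutions, contrary to the invertibility of~$\Phi$ encoded in $\mathrm J\ne0$. Hence $\mathrm D_xY=0$, giving $Y_{u_x}=0$ and $\hat{\mathrm D}_xY+fY_{u_y}=0$, the first two equations of~\eqref{eq:GenKGEqsConditionForContactAdmTransReducedB}. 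Finally, feeding $\mathrm D_yX=0$ into the eliminated $U^x$-identity, $(\mathrm D_xX)(\mathrm D_yU^x-(\Phi^*\tilde f)\mathrm D_yY)-(\mathrm D_yX)(\mathrm D_xU^x-(\Phi^*\tilde f)\mathrm D_xY)=0$, leaves $(\mathrm D_xX)(\mathrm D_yU^x-(\Phi^*\tilde f)\mathrm D_yY)=0$; since $\mathrm D_xX\not\equiv0$ (otherwise $X$ would be constant, contradicting $\mathrm J\ne0$) and the second factor is free of~$u_{xx}$, that factor vanishes, and splitting $\mathrm D_yU^x=(\Phi^*\tilde f)\mathrm D_yY$ with respect to $u_{yy}$ gives $U^x_{u_y}=(\Phi^*\tilde f)Y_{u_y}$ and $\hat{\mathrm D}_yU^x+fU^x_{u_x}=(\Phi^*\tilde f)\hat{\mathrm D}_yY$. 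Symmetrically, $\mathrm D_xY=0$ reduces the $U^y$-identity to $\mathrm D_xU^y=(\Phi^*\tilde f)\mathrm D_xX$, whose splitting with respect to $u_{xx}$ produces the last two equations of~\eqref{eq:GenKGEqsConditionForContactAdmTransReducedB}. The whole difficulty is concentrated in the dichotomy of the previous paragraph: the substantive point is that nondegeneracy of~$\Phi$ prevents the free second-order derivative from collapsing to a function of~$z$, and that the two branches it leaves are precisely those related by~$\mathscr I^0$; the rest is a routine splitting of the determining equations.
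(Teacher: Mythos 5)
Your local computations are sound, and your overall strategy --- restrict the second-prolongation identities to $\mathcal E_f$, split with respect to the parametric derivatives $u_{xx}$ and $u_{yy}$, and resolve the resulting dichotomy by composing with $\mathscr I^0$ --- is essentially the paper's. But there is a genuine gap at the step you yourself call decisive. What your argument actually establishes is a \emph{pointwise} alternative: at each point of $\mathcal E_f$ the product $(\mathrm D_xX)(\mathrm D_yX)$ vanishes, i.e., after splitting in $(u_{xx},u_{yy})$, the products $(\mathfrak Q_1X)(\mathfrak Q_2X)$ vanish for $\mathfrak Q_1\in\{\hat{\mathrm D}_x+f\p_{u_y},\p_{u_x}\}$ and $\mathfrak Q_2\in\{\hat{\mathrm D}_y+f\p_{u_x},\p_{u_y}\}$. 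From this you jump to ``thus $\mathrm D_xX=0$ or $\mathrm D_yX=0$ on all of $\mathcal E_f$''. That inference is not valid: the zero set of a product of smooth functions is the union of the factors' zero sets, and a priori $\Omega$ could split into a region where one factor vanishes and a region where the other does, separated by a set where both do. A single global composition with $\mathscr I^0$ cannot repair such a mixed configuration; it can only interchange the two pure ones. Since the lemma asserts the system~\eqref{eq:GenKGEqsConditionForContactAdmTransReduced} ``on the entire corresponding domain~$\Omega$'', this global uniformity is precisely the content to be proved, and it is where the paper spends most of its effort: it shows that the degenerate sets (where all $\mathfrak QX$ or all $\mathfrak QY$ vanish, or where the matrices $A_1,A_2$ drop rank) are nowhere dense; proves that at each good point the alternative for~$X$ is rigidly linked to the alternative for~$Y$ via the contact condition and $\mathrm J\ne0$, so that the good set splits as $\Xi_1\sqcup\Xi_2$ with \emph{disjoint closures}; and finally invokes connectedness of~$\Omega$ to conclude that one of the two pieces is empty. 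None of this appears in your proof, and without it the conclusion on all of~$\Omega$ does not follow.

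Two smaller points of the same flavor. First, for smooth (non-analytic) functions ``not identically zero'' does not imply ``nonzero on a dense set'', so your phrase ``on the open dense set where both are nonzero'' already presupposes the nowhere-density of the exceptional sets, which is part of what is missing. Second, your exclusion of the case $\mathrm D_yX=\mathrm D_yY=0$ via ``$J_2=0$ contradicts $\mathrm J\ne0$'' conflates the total Jacobian on the second jet space with the Jacobian of~$\Phi$ on the first; the clean route (the one in the paper) is to split these relations and the accompanying ones for $U^x,U^y$ with respect to $u_{yy}$ to get $X_{u_y}=Y_{u_y}=U^x_{u_y}=U^y_{u_y}=0$, deduce $U_{u_y}=0$ from~\eqref{eq:GenKGEqsSplitContactCondition}, and observe that the entire $u_y$-column of the Jacobian matrix of~$\Phi$ then vanishes. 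These two points are fixable; the unaddressed global dichotomy is the substantive omission.
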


\begin{proof}
We expand the condition $\Phi_*\mathcal E_f=\smash{\mathcal E_{\tilde f}}$
via substituting the expression for the involved jet variables with tildes 
in terms of the jet variables without tildes,
\begin{gather}\label{eq:GenKGEqsConditionForContactAdmTrans}\arraycolsep=.5ex
\left|\begin{array}{cc}\mathrm D_xU^y&\mathrm D_yU^y\\\mathrm D_xY  &\mathrm D_yY  \end{array}\right|=
\left|\begin{array}{cc}\mathrm D_xX  &\mathrm D_yX  \\\mathrm D_xU^x&\mathrm D_yU^x\end{array}\right|=(\Phi^*\tilde f)
\left|\begin{array}{cc}\mathrm D_xX  &\mathrm D_yX  \\\mathrm D_xY  &\mathrm D_yY  \end{array}\right|
\quad\mbox{on solutions of}\ \mathcal E_f.
\end{gather}
Here $\Phi^*$ denotes the pullback by~$\Phi$, $\Phi^*\tilde f:=\tilde f(X,Y,U,U^x,U^y)$.
The first equality in~\eqref{eq:GenKGEqsConditionForContactAdmTrans}
is a differential consequence of the system~\eqref{eq:GenKGEqsContactCondition}.
Define two $2\times4$ smooth matrix functions on~$\Omega$,
\[
A_1:=\left(
\begin{array}{c}
\mathfrak QX\\ \mathfrak QU^x-(\Phi^*\tilde f)\mathfrak QY
\end{array} 
\right)_{\mathfrak Q\in\mathcal M} 
\quad\mbox{and}\quad
A_2:=\left(
\begin{array}{c}
\mathfrak QY\\ \mathfrak QU^y-(\Phi^*\tilde f)\mathfrak QX
\end{array} 
\right)_{\mathfrak Q\in\mathcal M}, 
\]
where $\mathcal M:=(\p_{u_x},\,\p_{u_y},\,\hat{\mathrm D}_x+f\p_{u_y},\,\hat{\mathrm D}_y+f\p_{u_x})$.

It is obvious that the set $\Omega_1:=\{z\in\Omega\mid\rank A_1(z)=\rank A_2(z)=2\}$ is open.
Moreover, it is dense in~$\Omega$. 
Indeed, $\Omega_1=\Omega\setminus(\Omega_{01}\cup\Omega_{02}\cup\Omega_{03}\cup\Omega_{04})$, 
where 
\begin{gather*}
\Omega_{01}:=\big\{z\in\Omega\mid\mathfrak QX(z)=0,\,\mathfrak Q\in\mathcal M\big\},\quad 
\Omega_{03}:=\big\{z\in\Omega\setminus\Omega_{01}\mid\rank A_1=1\big\},\quad 
\\
\Omega_{02}:=\big\{z\in\Omega\mid\mathfrak QY(z)=0,\,\mathfrak Q\in\mathcal M\big\},\quad 
\Omega_{04}:=\big\{z\in\Omega\setminus\Omega_{02}\mid\rank A_2=1\big\}.
\end{gather*}
If the set~$\Omega_{01}$ (resp.\ $\Omega_{02}$) contains an open subset~$\mathcal U$, 
then the row of~$\mathrm J$ that is associated with~$X$ (resp.\ $Y$) and thus the Jacobian~$\mathrm J$ itself
would vanish on~$\mathcal U$, which is a contradiction. 
Therefore, the subsets~$\Omega_{01}$ and $\Omega_{02}$ are nowhere dense.  
Suppose that the set~$\Omega_{03}$ contains an open subset~$\mathcal U$. 
Then there exists $\Lambda\in{\rm C}^\infty(\mathcal U,\mathbb R)$ 
such that $\mathfrak QU^x-(\Phi^*\tilde f)\mathfrak QY=\Lambda\mathfrak QX$ on~$\mathcal U$ 
for any $\mathfrak Q\in\mathcal M$, 
which implies the equations 
$\mathrm D_xU^x=\Lambda\mathrm D_xX+(\Phi^*\tilde f)\mathrm D_xY$ and 
$\mathrm D_yU^x=\Lambda\mathrm D_yX+(\Phi^*\tilde f)\mathrm D_yY$ 
on~$\mathcal U$. 
We respectively subtract these equations from the equalities 
$\mathrm D_xU^x=U^{xx}\mathrm D_xX+U^{xy}\mathrm D_xY$ and
$\mathrm D_yU^x=U^{xx}\mathrm D_yX+U^{xy}\mathrm D_yY$, 
where 
$U^{xx}:=\Phi_{(2)}{\!}^*\tilde u_{\tilde x\tilde x}$, 
$U^{xy}:=\Phi_{(2)}{\!}^*\tilde u_{\tilde x\tilde y}$, 
and $\Phi_{(2)}$ denotes the prolongation of the contact transformation~$\Phi$ to the jet space~$\mathrm J^2(\mathbb R^2_{x,y},\mathbb R^{}_u)$. 
As a result, we derive the equations 
\begin{gather}\label{eq:GenKGEqsDerivedConditionForAdmTrans1}
(U^{xx}-\Lambda)\mathrm D_xX+(U^{xy}-\Phi^*\tilde f)\mathrm D_xY=0,\quad
(U^{xx}-\Lambda)\mathrm D_yX+(U^{xy}-\Phi^*\tilde f)\mathrm D_yY=0
\end{gather}
on~$\mathcal U$.
Restricting the equation~\eqref{eq:GenKGEqsDerivedConditionForAdmTrans1} on the manifold defined by~$\mathcal E_f$ in~$\mathrm J^2(\mathbb R^2_{x,y},\mathbb R^{}_u)$, 
where $u_{xy}=f$ and $U^{xy}=\Phi^*\tilde f$, leads to the equalities  
\[
\big(U^{xx}-\Lambda\big)(\hat{\mathrm D}_xX+fX_{u_y}+X_{u_x}u_{xx})=0,\quad
\big(U^{xx}-\Lambda\big)(\hat{\mathrm D}_yX+fX_{u_x}+X_{u_y}u_{yy})=0.
\]
Since the derivative~$\tilde u_{\tilde x\tilde x}$ is not constrained on the solution set of~\smash{$\mathcal E_{\tilde f}$}, 
this implies the equations $\hat{\mathrm D}_xX+fX_{u_y}+X_{u_x}u_{xx}=0$ and $\hat{\mathrm D}_yX+fX_{u_x}+X_{u_y}u_{yy}=0$, 
which can be split with respect to~$u_{xx}$ and~$u_{yy}$ into the condition $\mathfrak QX(z)=0$, $\mathfrak Q\in\mathcal M$, on~$\mathcal U$,
contradicting the definition of~$\Omega_{03}$. 
Therefore, the subset~$\Omega_{03}$ (and, similarly, $\Omega_{04}$) is nowhere dense.  

The splitting of the equation~\eqref{eq:GenKGEqsConditionForContactAdmTrans} with respect to~$u_{xx}$ and~$u_{yy}$
leads to the condition that the $2\times2$ minors of~$A_1$ and~$A_2$ 
associated with the pairs $(\mathfrak Q_1,\mathfrak Q_2)$ from the set $\mathcal M_1\times\mathcal M_2$, where 
\[
\mathcal M_1:=\big\{\hat{\mathrm D}_x+f\p_{u_y},\,\p_{u_x}\big\},\quad 
\mathcal M_2:=\big\{\hat{\mathrm D}_y+f\p_{u_x},\,\p_{u_y}\big\}, 
\]
vanish.
This condition implies, in view of the condition $\rank A_1(z)=\rank A_2(z)=2$ on~$\Omega_1$, the system 
\begin{gather*}
(\mathfrak Q_1X)(\mathfrak Q_2X)=0,\quad 
(\mathfrak Q_1X)\big(\mathfrak Q_2U^x-(\Phi^*\tilde f)\mathfrak Q_2Y\big)=0,\quad 
\big(\mathfrak Q_1U^x-(\Phi^*\tilde f)\mathfrak Q_1Y\big)(\mathfrak Q_2X)=0,\\
(\mathfrak Q_1Y)(\mathfrak Q_2Y)=0,\quad 
(\mathfrak Q_1Y)\big(\mathfrak Q_2U^y-(\Phi^*\tilde f)\mathfrak Q_2X\big)=0,\quad 
\big(\mathfrak Q_1U^y-(\Phi^*\tilde f)\mathfrak Q_1X\big)(\mathfrak Q_2Y)=0
\end{gather*}
on~$\Omega_1$ and thus, by continuity, on~$\Omega$.
Here the operator $\mathfrak Q_i$ runs through the set~$\mathcal M_i$, $i=1,2$.
Therefore, in each point of~$\Omega_1$, we have 
either $(\mathfrak Q_1X)_{\mathfrak Q_1\in\mathcal M_1}\ne(0,0)$ or $(\mathfrak Q_2X)_{\mathfrak Q_2\in\mathcal M_2}\ne(0,0)$ and
either $(\mathfrak Q_1Y)_{\mathfrak Q_1\in\mathcal M_1}\ne(0,0)$ or $(\mathfrak Q_2Y)_{\mathfrak Q_2\in\mathcal M_2}\ne(0,0)$.

If $(\mathfrak Q_1X)_{\mathfrak Q_1\in\mathcal M_1}\ne(0,0)$ at a point $z_0\in\Omega_1$, 
then $(\mathfrak Q_2Y)_{\mathfrak Q_2\in\mathcal M_2}\ne(0,0)$ at~$z_0$. 
Indeed, otherwise $(\mathfrak Q_1X)_{\mathfrak Q_1\in\mathcal M_1}\ne(0,0)$ 
and $(\mathfrak Q_1Y)_{\mathfrak Q_1\in\mathcal M_1}\ne(0,0)$ simultaneously at~$z_0$ and thus on some neighborhood~$\mathcal U$ of~$z_0$ in~$\Omega_1$. 
This implies the equations $\mathfrak Q_2X=\mathfrak Q_2Y=0$ and then $\mathfrak Q_2U^x=\mathfrak Q_2U^y=0$ with $\mathfrak Q_2\in\mathcal M_2$. 
In particular, $X_{u_y}=Y_{u_y}=U^x_{u_y}=U^y_{u_y}=0$
and, in view of the third equation in~\eqref{eq:GenKGEqsSplitContactCondition}, $U_{u_y}=0$, 
which contradicts the nondegeneracy of~$\Phi$. 
It is obvious that the inverse implication, $(\mathfrak Q_1X)_{\mathfrak Q_1\in\mathcal M_1}\ne(0,0)$ at a point of~$\Omega_1$ 
if $(\mathfrak Q_2Y)_{\mathfrak Q_2\in\mathcal M_2}\ne(0,0)$ at the same point, also holds true. 
Analogously, we also have that $(\mathfrak Q_2X)_{\mathfrak Q_2\in\mathcal M_2}\ne(0,0)$ at a point $z_0\in\Omega_1$ 
if and only if $(\mathfrak Q_1Y)_{\mathfrak Q_1\in\mathcal M_1}\ne(0,0)$ at~$z_0$.
Denoting 
\begin{gather*}
\Xi_1:=\big\{z\in\Omega_1\mid(\mathfrak Q_1X)_{\mathfrak Q_1\in\mathcal M_1}\ne(0,0),\,(\mathfrak Q_2Y)_{\mathfrak Q_2\in\mathcal M_2}\ne(0,0)\big\},\\
\Xi_2:=\big\{z\in\Omega_1\mid(\mathfrak Q_2X)_{\mathfrak Q_2\in\mathcal M_2}\ne(0,0),\,(\mathfrak Q_1Y)_{\mathfrak Q_1\in\mathcal M_1}\ne(0,0)\big\},
\end{gather*}
we get $\Omega_1=\Xi_1\sqcup\Xi_2$. 
On the closure $\mathop{\rm cl}(\Xi_1)$ of~$\Xi_1$ in~$\Omega$, the components of~$\Phi$ satisfy 
the system~\eqref{eq:GenKGEqsConditionForContactAdmTransReduced}.
Note that on the subset, where $X_{u_x}\ne0$ or $Y_{u_y}\ne0$,
the last equations in~\eqref{eq:GenKGEqsConditionForContactAdmTransReducedA} and in~\eqref{eq:GenKGEqsConditionForContactAdmTransReducedB}
are differential consequences of the other equations in~\eqref{eq:GenKGEqsConditionForContactAdmTransReduced} 
supplemented with the equations~\eqref{eq:GenKGEqsSplitContactCondition}. 
On the closure $\mathop{\rm cl}(\Xi_2)$ of~$\Xi_2$ in~$\Omega$, the components of~$\Phi$ satisfy the analogous system 
obtained from the system~\eqref{eq:GenKGEqsConditionForContactAdmTransReduced} by permutation of $(x,u_x)$ and $(y,u_y)$. 
The sets~$\mathop{\rm cl}(\Xi_1)$ and~$\mathop{\rm cl}(\Xi_2)$ are disjoint since otherwise $\mathrm J=0$ on $\mathop{\rm cl}(\Xi_1)\cap\mathop{\rm cl}(\Xi_2)$.
Therefore, $\Omega=\mathop{\rm cl}(\Omega_1)=\mathop{\rm cl}(\Xi_1\sqcup\Xi_2)=\mathop{\rm cl}(\Xi_1)\sqcup\mathop{\rm cl}(\Xi_2)$, 
which implies that both the sets~$\mathop{\rm cl}(\Xi_1)$ and~$\mathop{\rm cl}(\Xi_2)$ are open and closed in~$\Omega$, 
and hence one of the sets is empty and the other coincides with~$\Omega$. 
Up to the discrete equivalence transformation $\mathscr I^0$, 
we can assume that $\mathop{\rm cl}(\Xi_1)=\Omega$ and $\mathop{\rm cl}(\Xi_2)=\varnothing$, 
i.e., the system~\eqref{eq:GenKGEqsConditionForContactAdmTransReduced} holds on the entire set~$\Omega$.
\looseness=-1
\end{proof}

\section{Description of equivalence groupoids}
\label{sec:GenKGEqsEquivGroupoids}

We first compute the point equivalence groupoid of the class~$\mathcal H_{\rm gen}$ 
and then single out its three disjoint subclasses, 
$\mathcal H_{xy}:=\mathcal H_x\cap\mathcal H_y$,
$\mathcal H_x':=\mathcal H_x\setminus\mathcal H_y$ and
$\mathcal H_y':=\mathcal H_y\setminus\mathcal H_x$,  
where 
\begin{gather*}
\mathcal H_x:=\big\{\mathcal E_f\in\mathcal H_{\rm gen}\mid f=F^0(x,y,u,u_y)+F^1(x,y,u,u_y)u_x,\,F^1_x+F^0F^1_{u_y}=F^0_u+F^1F^0_{u_y}\big\},\\
\mathcal H_y:=\big\{\mathcal E_f\in\mathcal H_{\rm gen}\mid f=F^0(x,y,u,u_x)+F^1(x,y,u,u_x)u_y,\,F^1_y+F^0F^1_{u_x}=F^0_u+F^1F^0_{u_x}\big\},
\end{gather*}
and hence 
\begin{align*}
\mathcal H_{xy}=\big\{&\mathcal E_f\in\mathcal H_{\rm gen}\mid f=f^0(x,y,u)+f^1(x,y,u)u_x+f^2(x,y,u)u_y+f^3(x,y,u)u_xu_y,\\
                   &f^3_y=f^1_u,\,f^3_x=f^2_u,\,f^2_y=f^1_x=f^0_u+f^1f^2-f^3f^0\big\}.
\end{align*}
We will prove that any contact admissible transformation in~$\mathcal H_{\rm gen}$
that is not the first-order prolongation of a point admissible transformation in~$\mathcal H_{\rm gen}$
belongs to the contact equivalence groupoid of $\mathcal H_x\cup\mathcal H_y$. 
It is obvious that the subclasses~$\mathcal H_x$ and~$\mathcal H_y$ (resp.\ the subclasses~$\mathcal H_x'$ and~$\mathcal H_y'$)
are mapped onto each other by the permutation~$\mathscr I^0$. 
This is why any assertion that is true for one of these two subclasses holds true for the other after the modification using~$\mathscr I^0$, 
and each of them can be substituted by the other in the assertions below. 
Any equation~$\mathcal E_f$ from the subclass~$\mathcal H_x$ (resp.\ $\mathcal H_y$ or~$\mathcal H_{xy}$) 
can be (locally) represented in the form $\mathrm D_xg=0$ (resp.\ $\mathrm D_yh=0$ or~$\mathrm D_x\mathrm D_y\theta=0$). 
Here $g=g(x,y,u,u_y)$, $h=h(x,y,u,u_x)$ and~$\theta=\theta(x,y,u)$ 
are arbitrary smooth functions of their arguments with $g_{u_y}\ne0$, $h_{u_x}\ne0$ and $\theta_u\ne0$. 
(The representation for equations from the subclass~$\mathcal H_{xy}$ 
follows from those for the subclasses~$\mathcal H_x$ and~$\mathcal H_y$
in view of the theorem on null divergences \cite[Theorem 4.24]{olve1993A}.)
This is why all the equations from the subclasses~$\mathcal H_x$ and~$\mathcal H_y$, 
not to mention their intersection $\mathcal H_{xy}$, are Darboux integrable in a trivial way. 
The reparameterized class~$\mathcal H_y$, where $h$ is assumed as the arbitrary element instead of~$f$, 
possesses gauge equivalence transformations (see  the definition of this notion in \cite{popo2010a})
\[(\tilde x,\tilde y,\tilde u,\tilde u_{\tilde x},\tilde u_{\tilde y})=(x,y,u,u_x,u_y),\quad \tilde h=H(x,h),\] 
where $H$ is an arbitrary smooth function of~$(x,h)$ with $H_h\ne0$.
 
\begin{theorem}\label{thm:GenKGEqsEquivGroupoid}
{\samepage\begin{subequations}\label{eq:GenKGEqsGsim}
(i) The class~$\mathcal H_{\rm gen}$ is normalized in the usual sense with respect to point transformations.
Its point equivalence group~$G^\sim_{\rm gen}$ coincides with its contact equivalence group and 
is generated by 
the discrete equivalence transformation $\mathscr I^0$: $\tilde x=y$, $\tilde y=x$, $\tilde u=u$, $\tilde u_{\tilde x}=u_y$, $\tilde u_{\tilde y}=u_x$, $\tilde f=f$
and the transformations of the form
\begin{gather}\label{eq:GenKGEqsGsimA}
\tilde x=X(x),\quad \tilde y=Y(y),\quad \tilde u=U(x,y,u),\quad 
\tilde u_{\tilde x}=\frac{U_x+U_uu_x}{X_x},\quad \tilde u_{\tilde y}=\frac{U_y+U_uu_y}{Y_y},
\\ \label{eq:GenKGEqsGsimB}
\tilde f=\frac1{X_xY_y}(U_uf+U_{xy}+U_{xu}u_y+U_{yu}u_x+U_{uu}u_xu_y),
\end{gather}
\end{subequations}
where $X$, $Y$ and $U$ are arbitrary smooth functions of their arguments with $X_xY_yU_u\ne0$.

}

\medskip

(ii) The contact equivalence groupoid~$\mathcal G^{\sim\rm c}_{\rm gen}$ of the class~$\mathcal H_{\rm gen}$ 
is generated by 
\begin{itemize}\itemsep=0ex
\item
the first-order prolongation of the point equivalence groupoid~$\mathcal G^{\sim\rm p}_{\rm gen}$ of this class,
\item
the contact vertex group~$\mathcal G^{\rm c}_0$ of the wave equation~$\mathcal E_0\colon$ $u_{xy}=0$ and 
\item
the subgroupoid~$\mathcal G^{\sim\rm q}_y$ of the equivalence groupoid~$\mathcal G^{\sim\rm c}_y$ of the subclass~$\mathcal H_y'$ 
whose elements are induced, modulo gauge equivalence within the reparameterized class~$\mathcal H_y'$, 
by the pullbacks of the transformations of the form
$\tilde\tau=\eta$, $\tilde\xi=\xi$, $\tilde\upsilon=\Upsilon(\tau,\xi,\upsilon,\eta)$, $\tilde\eta=\tau$ with $\Upsilon_\upsilon\ne0$
in the space with the coordinates $(\tau,\xi,\upsilon,\eta)$ 
by the mapping $\Psi$: $(\tau,\xi,\upsilon)=(x,y,u)$, $\eta=h(x,y,u,u_x)$,
where $\tilde u_{\tilde x}=\Psi^*\Upsilon_\eta$, $\tilde u_{\tilde y}=\Psi^*\Upsilon_\xi+u_y\Psi^*\Upsilon_\upsilon$,
and $h$ is an arbitrary element of the reparameterized class~$\mathcal H_y'$.
\end{itemize}
The subclasses 
$\mathcal H_{xy}$, 
$\mathcal C_0:=\mathcal H_x\bigtriangleup\mathcal H_y=\mathcal H_x'\cup\mathcal H_y'$ and 
$\mathcal C_1:=\mathcal H_{\rm gen}\setminus(\mathcal H_x\cup\mathcal H_y)$ are invariant under the action of~$\mathcal G^{\sim\rm c}_{\rm gen}$. 
In other words, the partition of the class~$\mathcal H_{\rm gen}$ into these three subclasses, 
$\mathcal H_{\rm gen}=\mathcal H_{xy}\sqcup\mathcal C_0\sqcup\mathcal C_1$, 
induces the corresponding partitions of its contact equivalence groupoid, 
$\mathcal G^{\sim\rm c}_{\rm gen}=\mathcal G^{\sim\rm c}_{\mathcal H_{xy}}\sqcup\mathcal G^{\sim\rm c}_{\mathcal C_0}\sqcup\mathcal G^{\sim\rm c}_{\mathcal C_1}$.

\medskip

(iii) The $\mathcal G^{\sim\rm c}_{\rm gen}$-orbit of the wave equation~$\mathcal E_0$ coincides with the subclass~$\mathcal H_{xy}$.

\medskip

\begin{subequations}\label{eq:GenKGEqsWaveEqContactSymGroup}
(iv) The contact symmetry group~$G^{\rm c}_0$ of~$\mathcal E_0$ is generated by the discrete permutation transformation 
$\tilde x=y$, $\tilde y=x$, $\tilde u=u$, $\tilde u_{\tilde x}=u_y$, $\tilde u_{\tilde y}=u_x$
and the contact transformations of the~form 
\begin{gather}\label{eq:GenKGEqsWaveEqContactSymGroupA}
\tilde x=X(x,u_x),\quad \tilde y=Y(y,u_y),\quad \tilde u=cu+U^1(x,u_x)+U^2(y,u_y),
\\\label{eq:GenKGEqsWaveEqContactSymGroupB}
\tilde u_{\tilde x}=U^x(x,u_x),\quad 
\tilde u_{\tilde y}=U^y(y,u_y),
\end{gather}
\end{subequations}
where $X$, $Y$, $U^1$ and $U^2$ are arbitrary smooth functions of their arguments 
and $c$ is an arbitrary constant with $(X_x,X_{u_x})\ne(0,0)$, $(Y_y,Y_{u_y})\ne(0,0)$, $c\ne0$ and
\begin{gather}\label{eq:GenKGEqsWaveEqContactSymGroupConditions}
U^1_{u_x}=U^xX_{u_x},\quad cu_x+U^1_x=U^xX_x,\quad
U^2_{u_y}=U^yY_{u_y},\quad cu_y+U^2_y=U^yY_y.
\end{gather}
\end{theorem}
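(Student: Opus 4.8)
The plan is to start from the reduced determining equations of Lemma~\ref{lem:GenKGEqsContactAdmTransDetEqs}, assuming (up to $\mathscr I^0$) that the system~\eqref{eq:GenKGEqsConditionForContactAdmTransReduced} holds on all of~$\Omega$, and to organize the whole analysis around the dichotomy $X_{u_x}=0$ versus $X_{u_x}\ne0$ together with its mirror for~$Y_{u_y}$. The key observation is that the first equation $\hat{\mathrm D}_yX+fX_{u_x}=0$ with $X_{u_y}=0$ forces, wherever $X_{u_x}\ne0$, the representation $f=-(X_y+u_yX_u)/X_{u_x}$, which is affine in~$u_y$ with coefficients depending only on~$(x,y,u,u_x)$; comparing with the representation $\mathrm D_yh=0$ shows that $X$ is then a first-order first integral, so $\mathcal E_f\in\mathcal H_y$, the compatibility condition in the definition of~$\mathcal H_y$ being automatic because the integral~$X$ exists. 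Symmetrically, $Y_{u_y}\ne0$ forces $\mathcal E_f\in\mathcal H_x$. Hence $\mathcal E_f\in\mathcal C_1$ excludes both options, $\mathcal E_f\in\mathcal H_{xy}$ permits both, and $\mathcal E_f\in\mathcal C_0$ permits exactly one; this yields the three-way partition, and its invariance follows either from applying the dichotomy to a transformation and its inverse or, more conceptually, from the contact-invariance of the number ($0$, $1$ or~$2$) of characteristic directions carrying a first-order first integral, with~$\mathscr I^0$ merely swapping the two directions.

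For part~(i), I would restrict to point admissible transformations, for which $X$, $Y$, $U$ depend only on~$(x,y,u)$. Then $\hat{\mathrm D}_yX=0$ with $X_{u_y}=0$ splits with respect to~$u_y$ into $X_y=X_u=0$, giving $X=X(x)$, and likewise $Y=Y(y)$; substituting back into~\eqref{eq:GenKGEqsSplitContactCondition} and~\eqref{eq:GenKGEqsConditionForContactAdmTransReduced} leaves $U=U(x,y,u)$ with $U_u\ne0$ arbitrary, determines $U^x$, $U^y$, and recovers~\eqref{eq:GenKGEqsGsimB} by direct integration, so these exhaust the point case and coincide with~\eqref{eq:GenKGEqsGsim}. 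Normalization in the usual sense then reduces to checking that~\eqref{eq:GenKGEqsGsimB} always returns an element of~$\mathcal A$, which is manifest. That the contact equivalence group equals the point one follows from the dichotomy: an element of the equivalence group is admissible for every~$f$, in particular for some $f\in\mathcal C_1$, which forces $X_{u_x}=Y_{u_y}=0$ and hence a prolonged point transformation.

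For parts~(ii) and~(iii) I would treat the three blocks separately. Over~$\mathcal C_1$ only prolonged point transformations occur, so $\mathcal G^{\sim\rm c}_{\mathcal C_1}$ is the prolongation of~$\mathcal G^{\sim\rm p}_{\mathcal C_1}$. For~$\mathcal H_{xy}$ I would use the representation $\mathrm D_x\mathrm D_y\theta=0$ with $\theta_u\ne0$: the transformation $\tilde x=x$, $\tilde y=y$, $\tilde u=\theta(x,y,u)$ from~$G^\sim_{\rm gen}$ sends $\mathcal E_f$ to the wave equation, proving part~(iii) and showing that $\mathcal G^{\sim\rm c}_{\mathcal H_{xy}}$ is generated by the point part over~$\mathcal H_{xy}$ together with the contact vertex group~$\mathcal G^{\rm c}_0$ conjugated by these linearizations. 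The genuinely new work is over~$\mathcal C_0$: for a source in~$\mathcal H_y'$ one has $X_{u_x}\ne0$ and $Y_{u_y}=0$, so $X=h$ is a first integral and $Y=Y(y)$, and I would pass to the coordinates $\Psi\colon(\tau,\xi,\upsilon,\eta)=(x,y,u,h)$, in which the remaining determining equations should collapse to the statement that $\Phi$ is, modulo gauge equivalence, the pullback of the point transformation $\tilde\tau=\eta$, $\tilde\xi=\xi$, $\tilde\upsilon=\Upsilon(\tau,\xi,\upsilon,\eta)$, $\tilde\eta=\tau$, the genuineness arising from the swap $\tau\leftrightarrow\eta$ that promotes the first-order quantity~$h$ to the new base variable~$\tilde x$; the prolongation formulas then yield $\tilde u_{\tilde x}=\Psi^*\Upsilon_\eta$ and $\tilde u_{\tilde y}=\Psi^*\Upsilon_\xi+u_y\Psi^*\Upsilon_\upsilon$, with the $\mathcal H_x'$ case supplied by~$\mathscr I^0$. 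I expect this identification, and the bookkeeping showing that the three listed families together with~$\mathscr I^0$ generate the whole groupoid, to be the main obstacle.

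Finally, part~(iv) is the specialization $f=\tilde f=0$, so that $\Phi^*\tilde f=0$ and~\eqref{eq:GenKGEqsConditionForContactAdmTransReduced} reduces to $\hat{\mathrm D}_yX=\hat{\mathrm D}_yU^x=0$, $X_{u_y}=U^x_{u_y}=0$ and their mirrors; these split to $X=X(x,u_x)$, $U^x=U^x(x,u_x)$, $Y=Y(y,u_y)$, $U^y=U^y(y,u_y)$. Feeding them into~\eqref{eq:GenKGEqsSplitContactCondition} shows $U_{u_x}$ depends only on~$(x,u_x)$ and $U_{u_y}$ only on~$(y,u_y)$, whence a short integration gives $U=cu+U^1(x,u_x)+U^2(y,u_y)$ with $U_u$ constant, together with the constraints~\eqref{eq:GenKGEqsWaveEqContactSymGroupConditions}, the permutation generator accounting for the reduction modulo~$\mathscr I^0$.
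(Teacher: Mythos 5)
Your proposal is correct and follows essentially the same route as the paper: both arguments rest on Lemma~\ref{lem:GenKGEqsContactAdmTransDetEqs} and the case distinction on whether $X_{u_x}$ and $Y_{u_y}$ vanish, with the same treatment of the four parts (splitting out the equivalence group for~(i), setting $f=\tilde f=0$ for~(iv), linearizing via $\tilde u=\theta(x,y,u)$ for~(iii), and passing to the coordinates $(x,y,u,h)$ for the $\mathcal C_0$ block, which the paper treats just as tersely as you do). Your only local variations --- reading $\hat{\mathrm D}_yX+fX_{u_x}=0$ with $X_{u_y}=0$ as saying that $X$ is a first-order first integral so that $X_{u_x}\ne0$ directly places $\mathcal E_f$ in $\mathcal H_y$ (the paper instead splits with respect to $u_x$, $u_y$ and checks compatibility to land in $\mathcal H_{xy}$ when both derivatives are nonzero), and deducing the coincidence of the contact and point equivalence groups from the existence of equations in $\mathcal C_1$ rather than by splitting with respect to $(f,\tilde f)$ --- are valid and do not change the structure of the argument.
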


\begin{proof}
\begin{subequations}\label{eq:GenKGEqsPointAdmTransDetEqs}
We sketch the proof using the statement, the notation and the proof of Lemma~\ref{lem:GenKGEqsContactAdmTransDetEqs} 
and the equivalence generated by the transformation~$\mathscr I^0$. 
Looking for contact equivalence transformations of the class~$\mathcal H_{\rm gen}$, 
we split, with respect to the source and target values of the arbitrary element, $f$ and~$\tilde f$,
the equations of the system~\eqref{eq:GenKGEqsConditionForContactAdmTransReduced}
that do not involve these values simultaneously. 
Taking into account the contact condition~\eqref{eq:GenKGEqsSplitContactCondition}, 
we obtain the system 
\begin{gather}\label{eq:GenKGEqsPointAdmTransDetEqsA}
X_{u_x}=X_{u_y}=X_u=X_y=0, \quad 
Y_{u_x}=Y_{u_y}=Y_u=Y_x=0, \quad 
U_{u_x}=U_{u_y}=0,
\\\label{eq:GenKGEqsPointAdmTransDetEqsB}
U^x=\frac{\hat{\mathrm D}_xU}{X_x},\quad
U^y=\frac{\hat{\mathrm D}_yU}{Y_y},\quad
\end{gather}
\end{subequations}
where $X_xY_yU_u\ne0$ since $\mathrm J\ne0$. 
In addition, we derive the equation~\eqref{eq:GenKGEqsGsimB} defining the $f$-com\-po\-nents of equivalence transformations. 
Any point admissible transformation in the class~$\mathcal H_{\rm gen}$ relates its source and target according to~\eqref{eq:GenKGEqsGsimB}, 
its transformational part satisfies the equations~\eqref{eq:GenKGEqsPointAdmTransDetEqsA} and thus 
it is induced by a point equivalence transformation of the class~$\mathcal H_{\rm gen}$.
This proves point~{\it(i)} of the theorem.

Under the constraint $X_{u_x}=Y_{u_y}=0$, 
the systems~\eqref{eq:GenKGEqsSplitContactCondition} and~\eqref{eq:GenKGEqsConditionForContactAdmTransReduced} 
imply the system~\eqref{eq:GenKGEqsPointAdmTransDetEqs}, 
i.e., any contact admissible transformation whose $x$- and $y$-components do not depend respectively on~$u_x$ and on~$u_y$ 
is the first-order prolongation of a point admissible transformation.

If both $f=0$ and $\tilde f=0$, 
then the joint system~\eqref{eq:GenKGEqsSplitContactCondition}--\eqref{eq:GenKGEqsConditionForContactAdmTransReduced} 
reduces to $X_u=X_y=0$, $Y_u=Y_x=0$, $U^x_{u_y}=U^x_u=U^x_y=0$, $U^y_{u_x}=U^y_u=U^y_x=0$, 
$U^xX_{u_x}=U_{u_x}$, $U^x\hat{\mathrm D}_xX=\hat{\mathrm D}_xU$, $U^yY_{u_y}=U_{u_y}$, $U^y\hat{\mathrm D}_yY=\hat{\mathrm D}_yU$, 
where $(X_x,X_{u_x})\ne(0,0)$, $(Y_y,Y_{u_y})\ne(0,0)$ and $U_u\ne0$ in view of $\mathrm J\ne0$,
which implies $U_{u_xu_y}=U_{u_xu}=U_{u_xy}=U_{u_yu}=U_{u_yx}=0$, and hence point~{\it(iv)} holds true. 

If just $f=0$, then similarly we derive $X_u=X_y=0$ and $Y_u=Y_x=0$, 
i.e., $\tilde x=X(x,u_x)$ and $\tilde y=Y(y,u_y)$ with $(X_x,X_{u_x})\ne(0,0)$ and $(Y_y,Y_{u_y})\ne(0,0)$. 
Factoring out the element of~$\mathcal G^{\rm c}_0$ with the same $x$- and $y$-components as~$\mathcal T$ 
and with the identity $u$-component, $\tilde u=u$, from~$\mathcal T$, 
we can set $X=x$ and $Y=y$, and then $U=U(x,y,u)$. 
This makes point~{\it(iii)} obvious. 
Moreover, it becomes clear that 
the subclass~$\mathcal H_{xy}$ is invariant under the action of~$\mathcal G^{\sim\rm c}_{\rm gen}$.  

If both $X_{u_x}\ne0$ and $Y_{u_y}\ne0$, then the first equations 
in~\eqref{eq:GenKGEqsConditionForContactAdmTransReducedA} and~\eqref{eq:GenKGEqsConditionForContactAdmTransReducedB} 
immediately imply $f_{u_xu_x}=f_{u_yu_y}=0$ and, after splitting them with respect to~$u_x$ and~$u_y$ 
and testing the obtained equations on the compatibility with respect to~$X$ and~$Y$, 
give the other constraints on~$f$, jointly meaning that $\mathcal E_f\in\mathcal H_{xy}$. 
In other words, the $(x,y)$-components of $\mathcal T\in\mathcal G^{\sim\rm c}_{\rm gen}$ 
simultaneously depend on the first derivatives of~$u$
only if $\mathcal T\in\mathcal G^{\sim\rm c}_{\mathcal H_{xy}}$.

Suppose that only one of the derivatives $X_{u_x}$ and $Y_{u_y}$ does not vanish. 
Up to the equivalence generated by~$\mathscr I^0$, we can assume that $X_{u_x}\ne0$ and $Y_{u_y}=0$.
Then we derive from the joint system~\eqref{eq:GenKGEqsSplitContactCondition}--\eqref{eq:GenKGEqsConditionForContactAdmTransReduced} 
that 
\begin{gather*}
X=X(x,y,u,u_x),\quad Y=Y(y),\quad U=U(x,y,u,u_x),\quad\mbox{where}\quad U_{u_x}\hat{\mathrm D}_xX=X_{u_x}\hat{\mathrm D}_xU,\\ 
U^x=\frac{U_{u_x}}{X_{u_x}},\quad U^y=\frac{\hat{\mathrm D}_yU-U^x\hat{\mathrm D}_yX}{Y_y},\quad
\tilde f=\frac{U^y_{u_x}}{X_{u_x}},\quad f=\frac{\hat{\mathrm D}_yX}{X_{u_x}}. 
\end{gather*}
The components of the inverse~$\mathcal T^{-1}$ of~$\mathcal T$ are of the same form as the respective components of~$\mathcal T$. 
In particular, the derivative of the $x$-component of~$\mathcal T^{-1}$ with respect to~$u_x$ does not vanish as well 
since otherwise the admissible transformation~$\mathcal T^{-1}$ and thus the admissible transformation~$\mathcal T$ 
are the first-order prolongations of point admissible transformations in the class~$\mathcal H_{\rm gen}$, 
which contradicts the condition $X_{u_x}\ne0$.   
This is why the first equation in~\eqref{eq:GenKGEqsConditionForContactAdmTransReducedA} implies 
that both the equations~$\mathcal E_f$ and~$\mathcal E_{\tilde f}$ belong to the class~$\mathcal H_y$. 
Together with point~{\it(i)} and the proven $\mathcal G^{\sim\rm c}_{\rm gen}$-invariance of~$\mathcal H_{xy}$, 
this implies the $\mathcal G^{\sim\rm c}_{\rm gen}$-invariance of~$\mathcal H_x\cup\mathcal H_y$ and, hence, 
of~$\mathcal C_0$ and of~$\mathcal C_1$.
We can set $Y=y$ up to the ${\rm s}$-$G^\sim_{\rm gen}$-equivalence on $\mathcal G^{\sim\rm c}_{\rm gen}$.
Then, taking~$X$ as a value of the arbitrary element~$h$ for the reparameterized form $\mathrm D_yh=0$ 
of the equation~$\mathcal E_f$, we obtain that $\mathcal T\in\mathcal G^{\sim\rm q}_y$. 
\end{proof}

\begin{remark}
We explain, reformulate and expand the points of Theorem~\ref{thm:GenKGEqsEquivGroupoid}.

\medskip 

(i) The normalization of the class~$\mathcal H_{\rm gen}$ in the usual sense with respect to point transformations 
means that its point equivalence groupoid~$\mathcal G^{\sim\rm p}_{\rm gen}$
coincides with the action groupoid~$\mathcal G^{G^\sim_{\rm gen}}$ of the group~$G^\sim_{\rm gen}$, 
i.e., any point admissible transformation within the class~$\mathcal H_{\rm gen}$ 
is induced by its point equivalence transformation; see definitions in \cite{opan2020d,popo2010a,vane2020b}.

\medskip 

(ii) Two equations from the class~$\mathcal C_1$ 
are related by a contact transformation if and only if this transformation is the first-order prolongation of a point transformation  
that is the projection of an element of the group~$G^\sim_{\rm gen}$ to the space with coordinates~$(x,y,u)$. 
In other words, the class~$\mathcal C_1$ 
is normalized in the usual sense with respect to point transformations,
its point equivalence group coincides with~$G^\sim_{\rm gen}$ and with its contact equivalence group, 
and its contact equivalence groupoid is the first-order prolongation of its point equivalence groupoid.

\medskip 

(iii) Let $G^{\sim\,u}_{\rm gen}$ denote the subgroup of~$G^\sim_{\rm gen}$ 
that is constituted by the transformations of the form~\eqref{eq:GenKGEqsGsim} with $X(x)=x$ and $Y(y)=y$. 
The $\mathcal G^{\sim\rm c}_{\rm gen}$-orbit $\mathcal H_{xy}$ of the equation~$\mathcal E_0$ 
coincides with its $G^\sim_{\rm gen}$-orbit and, more specifically, with its $G^{\sim\,u}_{\rm gen}$-orbit.

An equation~$\mathcal E_f$ from the class~$\mathcal H_{\rm gen}$ reduces to~$\mathcal E_0$ by a contact transformation 
if and only if it reduces to~$\mathcal E_0$ by an element of~$G^{\sim\,u}_{\rm gen}$. 
Then the corresponding value of the arbitrary element~$f$ is necessarily of the form 
$f=(\theta_{xy}+\theta_{xu}u_y+\theta_{yu}u_x+\theta_{uu}u_xu_y)/\theta_u$, 
where $\theta$ is a smooth function of~$(x,y,u)$ with $\theta_u\ne0$.

The contact equivalence groupoid of the class~$\mathcal H_{xy}$ is generated by 
the contact vertex group~$\mathcal G^{\rm c}_0$ of the wave equation~$\mathcal E_0$ 
and the restriction of the action groupoid of the group~$G^{\sim\,u}_{\rm gen}$ to~$\mathcal H_{xy}$. 

\medskip 

(iv) 
In view of~\eqref{eq:GenKGEqsWaveEqContactSymGroupConditions}, 
the $u_x$-component (resp.\ the $u_y$-component) of a contact symmetry transformation of~$\mathcal E_0$ 
can be locally expressed in terms of the $x$- and the $u$-components (resp.\ the $y$- and the $u$-components).
More specifically, 
$U^x=U^1_{u_x}/X_{u_x}$, 
$U^x=(cu_x+U^1_x)/X_x$, 
$U^y=U^2_{u_y}/Y_{u_y}$ and
$U^y=(cu_y+U^2_y)/Y_y$,
whenever $X_{u_x}\ne0$, $X_x\ne0$, $Y_{u_y}\ne0$ and $Y_y\ne0$, respectively.

Excluding $U^x$ and~$U^y$ from~\eqref{eq:GenKGEqsWaveEqContactSymGroupConditions}, 
we derive $U^1_{u_x}X_x=(cu_x+U^1_x)X_{u_x}$ and $U^2_{u_y}Y_y=(cu_y+U^2_y)Y_{u_y}$. 
Whenever $X_{u_x}\ne0$, the former equation gives the following expression for~$U^1$: 
\[
U^1(x,u_x)=-c\int_{t_0}^x\Theta^1\big(x',X(x,u_x)\big)\,{\rm d}x'+\varphi^1\big(X(x,u_x)\big),
\]
where~$\Theta^1$ is the inverse of the function~$X$ with respect to~$u_x$, $\Theta^1\big(x,X(x,u_x)\big)=u_x$, 
and $\varphi^1$ is an arbitrary smooth function of its single argument. 
Similarly, the later equation integrates to an expression for~$U^2$ whenever $Y_{u_y}\ne0$,
\[
U^2(y,u_y)=-c\int_{y_0}^y\Theta^2\big(y',Y(y,u_y)\big)\,{\rm d}y'+\varphi^2\big(Y(y,u_y)\big),
\]
where~$\Theta^2$ is the inverse of the function~$Y$ with respect to~$u_y$, $\Theta^2\big(y,Y(y,u_y)\big)=u_x$, 
and $\varphi^2$ is an arbitrary smooth function of its single argument.

For contact symmetry transformation~\eqref{eq:GenKGEqsWaveEqContactSymGroup} of~$\mathcal E_0$, 
the equation~\eqref{eq:GenKGEqsWaveEqContactSymGroupConditions} implies that 
$U^1_{u_x}=0$ if $X_{u_x}=0$ and $U^2_{u_y}=0$ if $Y_{u_y}=0$. 
Therefore, such a transformation with $X_{u_x}=Y_{u_y}=0$ is the first-order prolongation 
of a point symmetry transformation of~$\mathcal E_0$. 

The point symmetry group~$G_0$ of the wave equation~$\mathcal E_0$ 
is generated by the discrete permutation transformation $\tilde x=y$, $\tilde y=x$, $\tilde u=u$
and the point transformations of the form 
$\tilde x=X(x)$, $\tilde y=Y(y)$, $\tilde u=cu+U^1(x)+U^2(y)$,
where $X$, $Y$, $U^1$ and $U^2$ are arbitrary smooth functions of their arguments with $X_x\ne0$ and $Y_y\ne0$,
and $c$ is an arbitrary nonzero constant.
%
Therefore, a complete list of discrete point symmetry transformations of the equation~$\mathcal E_0$ 
that are independent up to combining with each other and with continuous point symmetry transformations of this equation
is exhausted by the $(x,y)$-permutation and the three transformations 
each of which alternates the sign of one of the variables~$x$, $y$ and~$u$.
The quotient group of the point symmetry group~$G_0$ of~$\mathcal E_0$ 
with respect to its identity component is isomorphic to the group $\mathbb Z_2\times\mathbb Z_2\times\mathbb Z_2\times\mathbb Z_2$.
\end{remark}

\bigskip\par\noindent{\bf Acknowledgments.}
The author thanks Vyacheslav Boyko, Michael Kunzinger and Dmytro Popovych for helpful discussions. 
The research was supported by the Austrian Science Fund (FWF), projects P25064 and P28770.

\end{document}